\newtheorem{thm}{Theorem}
\newtheorem{lem}[thm]{Lemma}
\newtheorem{cor}[thm]{Corollary}
\theoremstyle{remark}
\newtheorem{rem}{Remark}
\theoremstyle{definition}
\newcommand{\N}{\mathbb{N}}
\newcommand{\calP}{\mathcal{P}}
\newcommand{\eps}{\varepsilon}
\newcommand{\PP}{\mathbb{P}}
\DeclareMathOperator{\disp}{disp}
\title{An upper bound on the minimal dispersion}
\author{Mario Ullrich}
\address{Institut f\"ur Analysis, Johannes Kepler Universit\"at, Altenberger Str. 69, 4040 Linz, Austria}
\email{mario.ullrich@jku.at}
\author{Jan Vyb\'iral}
\address{Dept. of Mathematics FNSPE, Czech Technical University in Prague, Trojanova 13, 12000 Prague, Czech Republic}
\thanks{Both authors acknowledge the kind support of the Oberwolfach Research Institute for Mathematics during the Oberwolfach Workshop ``Perspectives in High-Dimensional Probability and Convexity''.
The second author was supported by the ERC CZ grant LL1203 of the Czech Ministry of Education and by the Neuron Fund for Support of Science.}
\email{jan.vybiral@fjfi.cvut.cz}
\begin{document}

\begin{abstract}
For $\eps\in(0,1/2)$ and a natural number $d\ge 2$, let $N$ be a natural number with
\[
N \,\ge\, 2^9\,\log_2(d)\, \left(\frac{\log_2(1/\eps)}{\eps}\right)^2.
\] 
We prove that there is a set of $N$ points in the unit cube $[0,1]^d$,
which intersects all axis-parallel boxes with volume $\eps.$
That is, the dispersion of this point set is bounded from above by $\eps$.
\end{abstract}

\maketitle

\section{Introduction}

We are interested in bounds on the volume of the largest axis-parallel box that 
does not contain any point from a given finite point set $\calP\subset[0,1]^d$. 
Moreover, we would like to find a point set such that this volume is as small 
as possible. 
To be precise, we define, for $d\in\N$ and a point set $\calP\subset[0,1]^d$, 
the \emph{dispersion of $\calP$} by
\[
\disp(\calP) \;:=\; \sup_{B\colon B\cap\calP=\varnothing}\, |B|, 
\]
where the supremum is over all axis-parallel boxes $B=I_1\times\dots\times I_d$ 
with intervals $I_\ell\subset[0,1]$, 
and $|B|$ denotes the (Lebesgue) volume of $B$.
Moreover, for $n,d\in\N$, let the \emph{$n$th-minimal dispersion} 
be defined by 
\[
\disp(n,d) \;:=\; \inf_{\substack{\calP\subset[0,1]^d\colon\\ \#\calP=n}}\, \disp(\calP)
\]
and define its inverse function
\[
N(\eps,d) \;:=\; \min\Bigl\{n\colon \disp(n,d)\le\eps\Bigr\}.
\]

These quantities were introduced by Rote and Tichy~\cite{RT96} 
(as a modification of a quantity considered by Hlawka~\cite{Hl76}) 
and attracted quite a lot of attention in the past years 
in the context of information-based complexity theory, where the explicit 
dependence of certain geometric quantities on the dimension $d$ plays 
a crucial role.
Bounds on the dispersion (or any of its variants) translate into bounds on 
worst-case errors (and hence complexity bounds) for several numerical problems. 
These include optimization in different settings~\cite{Niederreiter83,YLV00}, 
approximation of high-dimensional rank-1 tensors~\cite{BDDG14,NR15}
and, very recently, approximation of $L_p$-norms and 
Marcinkiewicz-type discretization~\cite{Te17a,Te17b,Te17c}.
However, it is still not clear so far, if there exists a numerical problem that corresponds to the dispersion
in the same way as the \emph{discrepancy} corresponds to numerical integration, 
see e.g.~\cite{DP10,DP14,DT97,Niederreiter92,No15,NW10}.

Besides this, the dispersion is clearly an interesting geometric quantity 
on its own.
It is easy to define and one might think it is also simple to tackle.   
But, as the dispersion still resists a precise analysis,
this does not seem to be the case. 
However, there are several upper and lower bounds on the minimal dispersion, 
most of which were established in the past three years. 
Here we comment briefly on the state of the art.

First of all, it is quite easy to see that the minimal dispersion is of order 
$n^{-1}$ for all $d$. The best bounds of this order so far, 
which show also an explicit dependence on $d$,  
are
\[
\frac{\log_2(d)}{4(n+\log_2(d))} \;\le\; \disp(n,d)
\;\le\; \frac{C^d}{n}
\]
for some constant $C<\infty$. The lower bound is due to 
Aistleitner et al.~\cite{AHR15} 
and the upper bound was obtained by Larcher~\cite{LaPC} 
(see \cite[Section~4]{AHR15} for the proof).
Concerning the dependence on the dimension $d$, we see that the above 
bounds are far from being tight. However, it was recently proved by 
Sosnovec~\cite{Sos17}, that (surprisingly) the logarithmic dependence 
in the lower bound is sharp. He proved that, for every fixed $\eps>0$,  
\[
N(\eps,d) \,\le\, c_\eps\,\log_2(d).
\]
However, in this bound the $\eps$-dependence is far off. 
Further results on the dispersion are polynomial (in $d$ and $1/\eps$) bounds 
by Rudolf~\cite{Ru17} (see Remark~\ref{rem:daniel}) and
an explicit construction based on sparse grids by Krieg~\cite{Kr17}.
Interestingly, a lower bound linear
in $d$ was recently obtained by one of the authors~\cite{MU18} in the periodic 
setting.

It seems reasonable to conjecture that $\disp(n,d)\asymp\log(d)/n$. 
However, it is not yet clear if this bound can hold for all $n$ and $d$.

In this article we refine the analysis of \cite{Sos17} paying attention to the $\eps$-dependence and narrow the existing gap. We prove an
upper bound on the inverse of the minimal dispersion that is
logarithmic in $d$ and almost quadratic in $1/\eps$.

\medskip
\begin{thm}\label{thm:main}
Let $d\ge 2$ be a natural number and let $\eps\in(0,1/2)$. Then there exists a point set 
$\calP\subset[0,1]^d$ with $\disp(\calP)\le\eps$ and 
\[
\#\calP \;\le\; 2^7\,\log_2(d)\,\frac{\bigl(1+\log_2(\varepsilon^{-1})\bigr)^2}{\varepsilon^{2}}.
\]
\end{thm}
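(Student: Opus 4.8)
The plan is to use the probabilistic method: I choose a random point set $\calP$ of the prescribed cardinality according to a carefully designed distribution, and show that with positive probability it stabs every axis-parallel box of volume larger than $\eps$. By a union bound it then suffices to (a) reduce the uncountable family of boxes to a manageable finite family $\F$ with the property that stabbing every member of $\F$ already forces $\disp(\calP)\le\eps$, and (b) balance $\log(\#\F)$ against the probability that a single random point lands in a fixed box. Since the target cardinality is logarithmic in $d$, the whole difficulty is concentrated in making $\log(\#\F)$ grow like $\log_2(d)$ rather than like $d$.

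The first key idea is a dyadic bookkeeping of the side lengths. For a box $B=I_1\times\dots\times I_d$ with $|B|>\eps$ I set $h_\ell:=\max\{0,\lceil\log_2(1/|I_\ell|)\rceil\}$. Because $\sum_\ell \log_2(1/|I_\ell|)=\log_2(1/|B|)<\log_2(1/\eps)$ and every summand is nonnegative, the total budget $\sum_\ell h_\ell$ is at most $2\log_2(1/\eps)$, and in particular at most $\log_2(1/\eps)$ of the coordinates are \emph{active}, meaning $|I_\ell|\le 1/2$; I call a coordinate with $|I_\ell|>1/2$ \emph{wide}. The second key idea neutralises the wide coordinates: every interval of length strictly larger than $1/2$ contains the midpoint $1/2$, so a point whose $\ell$th coordinate equals $1/2$ automatically lies in $I_\ell$ for every wide $\ell$. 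Iterating this on dyadic subintervals (a point at the midpoint of a dyadic interval of length $2^{-j}$ is correctly placed as soon as $I_\ell$ contains that midpoint) lets me round each $I_\ell$ to a dyadic interval of length $2^{-h_\ell}$ and describe the rounded box $\widehat B$ by the data $(h_\ell)_\ell$ together with one dyadic position per active coordinate. The number of such $\widehat B$ is then controlled by first choosing the active set and its depths, which costs at most $\exp(O(\log_2(1/\eps)\,\log_2 d))$ once the budget constraint is used, and then the $\prod_\ell 2^{h_\ell}\le 2^{2\log_2(1/\eps)}=\eps^{-2}$ positions. This is exactly where the factor $\log_2 d$, and not $d$, enters.

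For the construction I would not take uniform points but \emph{mostly central} dyadic points: independently in each coordinate a point takes the value $1/2$ with high probability, and with smaller probability descends one or more dyadic levels to a random midpoint, the depth distribution being tuned to the budget above. With such points the wide coordinates of any $\widehat B$ are satisfied for free, while the probability that one point simultaneously realises the correct depth and position in all active coordinates is bounded below by a quantity of order $\eps^{2}/\mathrm{poly}(\log_2(1/\eps))$. Taking $\log(\#\F)$ of order $\log_2(1/\eps)\,\log_2 d$ and dividing by this stabbing probability yields the claimed cardinality $2^{7}\log_2(d)\,(1+\log_2(\eps^{-1}))^{2}\eps^{-2}$ after optimising the free parameters and absorbing constants; the extra factor $(1+\log_2(\eps^{-1}))$ beyond the budget count comes from the depth matching.

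The step I expect to be the main obstacle is precisely reconciling these two competing demands inside a single product distribution: to make the wide coordinates free one wants the per coordinate probability of being non central as small as $\approx 1/d$, whereas to cover up to $\log_2(1/\eps)$ active coordinates one needs that probability not too small, since a product distribution pays a factor per active coordinate. Resolving this tension, most naturally by splitting $\calP$ into $O(\log_2(1/\eps))$ independent layers, the $j$th layer using a non central probability matched to boxes with roughly $j$ active coordinates, and by exploiting that the volume budget forbids a box from having both many and very short active sides, is where the quantitative heart of the argument lies and is the source of the quadratic dependence on $1/\eps$.
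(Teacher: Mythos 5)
You correctly isolate the difficulty in your final paragraph, but it is not a loose end that ``optimising the free parameters'' can settle: it is a structural obstruction that defeats the design as stated. In your scheme a class of boxes records data only for the active coordinates (those with $|I_\ell|\le 1/2$). Since the only point common to all subintervals of $[0,1]$ of length greater than $1/2$ is $1/2$ itself, a single point that stabs \emph{every} box of a class must have coordinate exactly $1/2$ in each of the $d-a$ non-active coordinates; and the union bound over an (infinite) class requires precisely such a common certifying point --- avoiding this would mean refining the net by recording positions of the moderately wide sides, which re-inflates $\log\#\F$ from order $\log_2(1/\eps)\log_2 d$ to order $\eps^{-1}\log_2(1/\eps)\log_2 d$. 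Now consider boxes whose $a$ active sides lie inside $(1/2,1)$, so that the certifying point must be non-central there. If $\theta$ denotes the central mass of your per-coordinate distribution, the certifying probability is at most $\theta^{\,d-a}(1-\theta)^{a}\le (a/d)^{a}$ for \emph{every} choice of $\theta$, so the union bound forces $n\gtrsim (d/a)^{a}$: already $a=1$ gives $n\gtrsim d$ (linear in $d$, i.e.\ Rudolf's regime, cf.\ Remark~\ref{rem:daniel}), $a=2$ gives $n\gtrsim d^{2}$, and $a\approx\log_2(1/\eps)$ gives $n\gtrsim d^{\log_2(1/\eps)}$. Splitting $\calP$ into layers indexed by $a$ does not help, since each layer faces the same ceiling $(a/d)^{a}$ for its own value of $a$. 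Hence the advertised per-class stabbing probability $\eps^{2}/\mathrm{poly}(\log_2(1/\eps))$ is unobtainable: it accounts for the active coordinates but ignores the simultaneous centrality requirement in the up-to-$d$ wide ones, which is exactly the tension you flagged but did not resolve.

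It is instructive to see that the paper resolves this same tension in the opposite direction from your plan. It sets the wideness threshold at $1-2^{-k}$ with $k=\lceil\log_2(1/\eps)\rceil$, rather than at $1/2$, and samples uniformly from the grid $M_k^d$, $M_k=\{1/2^k,\dots,(2^k-1)/2^k\}$: an interval of length exceeding $1-2^{-k}$ contains \emph{all} of $M_k$, so wide coordinates are stabbed with probability one by every grid point, and no central atom (hence no $\theta^{d}$ penalty) is needed. The price is paid in entropy rather than probability: the volume constraint $2^{-k}<|B|\le(1-2^{-k})^{m_1(s)}$ only limits the number of recorded, non-wide coordinates to $m_1(s)<A_k=\ln(2)\,k\,2^{k}\approx\eps^{-1}\log_2(1/\eps)$, each recorded to precision $2^{-k}$ in both length and position, giving a class count of order $\exp\bigl(k2^{k}\log_2(2^{k+1}d)\bigr)$; correspondingly a per-class probability of order $\eps$ suffices, which is exactly Lemma~\ref{lem:prob}, proved via the common core box $B(p,s)$. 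Both splits formally yield $n\approx\eps^{-2}$ times logarithmic factors, but only the paper's split (probability $\approx\eps$, entropy $\approx\eps^{-1}\log_2(1/\eps)\log_2 d$) is realizable; your intended split (probability $\approx\eps^{2}$, entropy $\approx\log_2(1/\eps)\log_2 d$) is blocked by the computation above. Your dyadic bookkeeping and the midpoint observation are sound as far as they go, but to complete a proof along these lines you would have to enlarge the wideness threshold until wide sides are free for \emph{every} sample point --- at which point you have reconstructed the paper's argument.
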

\medskip

Clearly, the right hand side is bounded above by the $N$
given in the abstract.
Moreover, Theorem~\ref{thm:main} directly implies the following.


\begin{cor}\label{cor:main}
For $n,d\in\N$ with $n\ge2$ and $d\ge 2$ we have
\[
\disp(n,d) \;\le\; c\,\log_2(n)\,\sqrt{\frac{\log_2(d)}{n}}
\]
for some absolute constant $c>0$.
\end{cor}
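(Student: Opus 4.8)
The plan is to invert the bound of Theorem~\ref{thm:main}. Rewriting that theorem in terms of the inverse function, it asserts that for every $d\ge2$ and every $\eps\in(0,1/2)$ one has
\[
N(\eps,d)\;\le\; 2^7\,\log_2(d)\,\frac{\bigl(1+\log_2(\eps^{-1})\bigr)^2}{\eps^2}.
\]
Fix $n,d\ge2$ and set $\eps:=A\,\log_2(n)\sqrt{\log_2(d)/n}$, where $A\ge1$ is an absolute constant to be fixed at the end. I would then split into two cases according to the size of $\eps$.

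If $\eps\ge1/2$, I would simply invoke the trivial bound $\disp(n,d)\le1$, which holds because every axis-parallel box in $[0,1]^d$ has volume at most $1$. Since in this case $A\,\log_2(n)\sqrt{\log_2(d)/n}\ge1/2$, multiplying through by $2A$ gives $2A\,\log_2(n)\sqrt{\log_2(d)/n}\ge1\ge\disp(n,d)$, which is exactly the claimed estimate with $c=2A$.

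The substantive case is $\eps<1/2$, where Theorem~\ref{thm:main} applies. Here I would substitute the chosen $\eps$ into the displayed bound for $N(\eps,d)$ and show the right-hand side is at most $n$, which yields $\disp(n,d)\le\eps$. The only point requiring care is the logarithmic factor $\bigl(1+\log_2(\eps^{-1})\bigr)^2$. From
\[
\log_2(\eps^{-1})=\tfrac12\log_2(n)-\log_2(A)-\log_2\log_2(n)-\tfrac12\log_2\log_2(d),
\]
and using $n,d\ge2$ (so $\log_2\log_2(n)\ge0$ and $\log_2\log_2(d)\ge0$) together with $A\ge1$, I obtain $\log_2(\eps^{-1})\le\tfrac12\log_2(n)$, and hence, since $\log_2(n)\ge1$, the estimate $(1+\log_2(\eps^{-1}))^2\le\tfrac94\log_2^2(n)$. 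Substituting this and $\eps^2=A^2\log_2^2(n)\log_2(d)/n$ into the bound for $N(\eps,d)$ cancels the factors $\log_2(d)$ and $\log_2^2(n)$ and leaves
\[
N(\eps,d)\;\le\;\frac{9\cdot2^7}{4A^2}\,n.
\]
Choosing $A$ with $A^2\ge 9\cdot2^7/4$ (e.g.\ $A=24$) makes this at most $n$, finishing this case with constant $A\le 2A$. Combining the two cases proves the corollary with $c=2A$.

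The feature that makes the argument succeed — and essentially the only place anything could go wrong — is this cancellation: the $\log_2^2(n)$ generated by the numerator $(1+\log_2\eps^{-1})^2$ matches exactly the $\log_2^2(n)$ already present in $\eps^2$, which is precisely why the almost-quadratic $\eps$-dependence of Theorem~\ref{thm:main} collapses to the single factor $\log_2(n)$ here. I expect the main (and rather minor) obstacle to be the bookkeeping of the lower-order logarithmic terms needed to guarantee $\log_2(\eps^{-1})\le\tfrac12\log_2(n)$ uniformly over all $n,d\ge2$, together with the clean treatment of the boundary regime $\eps\ge1/2$ via the trivial bound $\disp(n,d)\le1$.
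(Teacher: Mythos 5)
Your proof is correct and is essentially the paper's own argument: the paper states Corollary~\ref{cor:main} without a written proof, as a ``direct'' implication of Theorem~\ref{thm:main}, and your argument is precisely the routine inversion this implication requires---setting $\eps=A\,\log_2(n)\sqrt{\log_2(d)/n}$, dispatching the regime $\eps\ge 1/2$ with the trivial bound $\disp(n,d)\le 1$, and exploiting the exact cancellation of $\log_2^2(n)$ between $(1+\log_2(\eps^{-1}))^2$ and $\eps^2$. Your bookkeeping checks out (in particular $\log_2(\eps^{-1})\le\tfrac12\log_2(n)$ for $n,d\ge2$ and $A\ge1$, giving $N(\eps,d)\le \tfrac{288}{A^2}\,n\le n$ for $A=24$, hence $c=48$ works).
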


\goodbreak


\section{Proof}

We will now prove Theorem~\ref{thm:main}. 
For this, we have to prove that there exists a point set $\calP$ with the 
desired cardinality that has dispersion bounded by $\eps$, i.e., 
every box of volume $\eps$ contains at least one point from $\calP$. 

For $0<\varepsilon<1/2$, let $k\in\N$ with $2^{-k}\le \varepsilon<2^{-k+1}$, 
i.e., $k=\lceil\log_2(1/\varepsilon)\rceil\ge2$, 
and define 
$$
M_k=\left\{\frac{1}{2^k},\frac{2}{2^k},\dots,\frac{2^k-1}{2^k}\right\}\subset [0,1].
$$
We consider the random point set $X=\{x^1,x^2,\dots,x^n\}\subset M_k^d$ with 
the coordinates $x^j_{\ell}$, $j=1,\dots,n$, $\ell=1,\dots,d$, 
being chosen independently and uniformly from $M_k$.
We show that for $n$ growing polynomially in $1/\varepsilon$ and 
logarithmically in $d$, 
$X$ intersects every cube with sides parallel to
the coordinate axis and volume at least $\varepsilon$ with positive probability. 
This proves our existence result.

We begin with splitting the set of all boxes of volume at least $2^{-k}$ into 
several groups. This is necessary for the following union bound.
Define 
\[
\Omega_k \;:=\; \Bigl\{B\subset[0,1]^d\colon B \ \text{is an axis-parallel box with}\ |B|>2^{-k}\Bigr\}
\]
and, for $p=(p_{\ell})_{l=1}^d\in M_k^d$ and $s=(s_{\ell})_{\ell=1}^d\in\{0,\dots,2^k-1\}^{d}$, 
let 
\begin{equation}\label{eq:defOmegak}
\begin{split}
\Omega_k(p,s) \,:=\, &\Bigl\{I_1\times\dots\times I_d\in\Omega_k\colon \quad \frac{s_\ell}{2^k}<|I_\ell|\le\frac{s_\ell+1}{2^k}\\
&\quad \text{and}\quad 
\inf I_\ell\in\Bigl[p_\ell-\frac1{2^k},p_\ell\Bigr)\quad\text{for all}\quad \ell=1\dots,d
\Bigr\}.
\end{split}\end{equation}
Clearly, the sets $\Omega_k(p,s)$ form a partition of $\Omega_k$. 
It will be important in the following that all sets from 
$\Omega_k(p,s)$ contain almost the same elements from $M_k^d$, and that 
$\Omega_k(p,s)=\varnothing$ for several choices of $s$ and $p$. 
E.g., this is the case if $s_\ell=0$ for some $\ell=1,\dots,d$.

The proof of the main result is based, inter alia, on the following lemma, 
which will be proved at the end of this section.

\medskip

\begin{lem}\label{lem:prob}
Let $x$ be uniformly distributed in $M_k^d$. 
Then, for each $B\in\Omega_k$, 
\[
\PP(x\in B) \;>\; 2^{-k-4}.
\]
Moreover, for each $p\in M_k^d$ and $s\in\{1,\dots,2^k-1\}^d$, we have 
\[
\PP\bigl(\exists B\in\Omega_k(p,s)\colon x\notin B \bigr) 
\;<\; \exp\bigl(-2^{-k-4} \bigr).
\]
\end{lem}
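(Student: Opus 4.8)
The plan is to prove both parts by reducing everything to a single dimension-free product estimate, the first claim being the main work and the second following from it.

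For the first claim, fix $B=I_1\times\dots\times I_d\in\Omega_k$. Since the coordinates of $x$ are independent and uniform on $M_k$, we have $\PP(x\in B)=\prod_{\ell=1}^d \frac{n_\ell}{2^k-1}$, where $n_\ell:=\#(M_k\cap I_\ell)$. First I would record the elementary grid-counting bound $n_\ell\ge\max\{1,\,2^k|I_\ell|-1\}$: the value $2^k|I_\ell|-1$ comes from counting the multiples of $2^{-k}$ in an interval of length $|I_\ell|$ (the two excluded endpoints $0,1$ cost nothing, as one checks case by case), while $n_\ell\ge1$ holds because every coordinate satisfies $|I_\ell|>2^{-k}$ (otherwise $|B|\le\prod_\ell|I_\ell|\le2^{-k}$) and the maximal gap between consecutive points of $M_k$ is $2^{-k}$. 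Writing $\prod_\ell|I_\ell|=|B|>2^{-k}$, it then suffices to prove the estimate $\prod_{\ell=1}^d \frac{n_\ell}{(2^k-1)\,|I_\ell|}\ge 2^{-4}$, since multiplying by $|B|>2^{-k}$ yields $\PP(x\in B)>2^{-k-4}$.

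The crux, and the step I expect to be the main obstacle, is this last product estimate, because the naive per-coordinate bound $n_\ell/((2^k-1)|I_\ell|)\ge 1/2$ only gives $\PP(x\in B)\ge 2^{-d}|B|$, which is useless for large $d$. The point is that the per-coordinate loss $\lambda_\ell:=\log_2\frac{(2^k-1)|I_\ell|}{n_\ell}\ge0$ must be summed against the volume budget rather than bounded individually. I would first observe that at most one coordinate can be \emph{short}, i.e. have $|I_\ell|<2^{1-k}$: each such coordinate forces $\log_2(1/|I_\ell|)>k-1$, while $\sum_\ell\log_2(1/|I_\ell|)<k$ because $|B|>2^{-k}$; for $k\ge2$ this leaves room for at most one, contributing at most $1$ to $\sum_\ell\lambda_\ell$. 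For the remaining coordinates ($2^k|I_\ell|\ge2$) I would use $n_\ell\ge 2^k|I_\ell|-1$ to bound $\lambda_\ell$ by a decreasing function of $b_\ell:=2^k|I_\ell|$ that vanishes at $b_\ell=2^k$, and then compare this function linearly to the ``cost'' $\log_2(1/|I_\ell|)=k-\log_2 b_\ell$. Summing the resulting inequality $\lambda_\ell\le\frac{1}{k-1}\log_2(1/|I_\ell|)$ over these coordinates and using $\sum_\ell\log_2(1/|I_\ell|)<k$ gives a contribution below $k/(k-1)\le2$. Hence $\sum_\ell\lambda_\ell<3$, so $\prod_\ell\frac{n_\ell}{(2^k-1)|I_\ell|}>2^{-3}\ge2^{-4}$, the desired estimate (with room to spare). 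The delicate point is verifying the pointwise comparison between the loss function and the cost uniformly in $b_\ell\in[2,2^k]$ and $k\ge2$.

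For the second claim, fix $p$ and $s$ with $\Omega_k(p,s)\ne\varnothing$ (otherwise the probability is $0$). Since $\Omega_k(p,s)$ is a product of admissible choices in each coordinate, the event $\{x\notin B\text{ for some }B\}$ is the complement of $\{x\in B\text{ for all }B\}=\{x_\ell\in J_\ell\ \forall\ell\}$, where $J_\ell$ is the intersection of all admissible intervals in coordinate $\ell$. A direct inspection of the definition \eqref{eq:defOmegak} shows that the points of $M_k$ lying in every admissible $I_\ell$ are exactly $p_\ell,p_\ell+2^{-k},\dots,p_\ell+(s_\ell-1)2^{-k}$, i.e. precisely $s_\ell$ of them (non-emptiness guarantees the largest of these is $<1$, hence in $M_k$). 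By independence,
\[
\PP\bigl(x\in B\ \text{for all}\ B\in\Omega_k(p,s)\bigr)=\prod_{\ell=1}^d\frac{s_\ell}{2^k-1}.
\]
Finally I would feed this back into the first part: any $B_0\in\Omega_k(p,s)$ gives $\prod_\ell\frac{s_\ell+1}{2^k}\ge|B_0|>2^{-k}$, so applying the estimate established above with $|I_\ell|$ replaced by $(s_\ell+1)2^{-k}$ (for which $2^k|I_\ell|-1=s_\ell\ge1$, so no coordinate is short) yields $\prod_\ell\frac{s_\ell}{2^k-1}>2^{-k-4}$. Therefore the probability in question equals $1-\prod_\ell\frac{s_\ell}{2^k-1}<1-2^{-k-4}\le\exp(-2^{-k-4})$, using $1-t\le e^{-t}$, which is exactly the claim.
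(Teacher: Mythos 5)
Your proof is correct, and while it shares the paper's skeleton --- independent coordinates, counting grid points of $M_k$ in each $I_\ell$, and a one-dimensional inequality trading the per-coordinate counting loss against volume with the exponent $\frac{k}{k-1}$ --- your handling of the accumulated loss is genuinely different. The paper's key inequality \eqref{eq:ineq'} carries a correction factor $\bigl(1-\frac{1}{k2^k}\bigr)$, chosen so that its verification reduces (after replacing the exponent $\frac{k}{k-1}$ by $1$) to easy endpoint checks; the price is the accumulated factor $\bigl(1-\frac{1}{k2^k}\bigr)^{m_1(s)}$, which the paper absorbs using the bound $m_1(s)<\ln(2)\,k\,2^k$ from \eqref{eq:m1}. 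You instead isolate the at most one ``short'' coordinate (with $|I_\ell|<2^{1-k}$, cost $<1$) and bound every other coordinate's loss by $\frac{1}{k-1}\log_2(1/|I_\ell|)$, summed against the budget $\log_2(1/|B|)<k$. This is self-contained (no appeal to \eqref{eq:m1} inside the lemma), works directly for arbitrary $B\in\Omega_k$ without routing through the partition, and even yields the slightly better constant $2^{-k-3}$. Your flagged ``delicate point'' does hold, and by exactly the paper's one-critical-point method: setting $h(b):=\frac{k-\log_2 b}{k-1}-\log_2\frac{(1-2^{-k})\,b}{b-1}$, one computes $h'(b)=\frac{1}{\ln 2}\bigl(\frac{1}{b-1}-\frac{k}{(k-1)b}\bigr)$, which vanishes only at $b=k$ and is positive before and negative after; hence $h$ attains its minimum on $[2,2^k]$ at an endpoint, and $h(2)=-\log_2(1-2^{-k})>0$ while $h(2^k)=0$. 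Equivalently, your inequality is \eqref{eq:ineq'} with the correction factor removed, valid on the full range $j=1,\dots,2^k-1$ (with equality at $j=2^k-1$); it is precisely at the right endpoint that the paper's cruder exponent-$1$ reduction would fail without the factor, which is why the paper needs it and you, using the true exponent, do not.

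Two minor quibbles, neither of which affects correctness. First, your parenthetical claim $\lambda_\ell\ge 0$ can fail (take $|I_\ell|$ slightly less than $1$, so that $n_\ell=2^k-1$ while $(2^k-1)|I_\ell|<2^k-1$), but since you only ever use upper bounds on the $\lambda_\ell$ and negative terms only help, nothing breaks. Second, in the second part your characterization of the common grid points as \emph{exactly} $\{p_\ell,\dots,p_\ell+(s_\ell-1)2^{-k}\}$ is true but more than needed: the paper only proves, and only needs, the containment $B\cap M_k^d\supset B(p,s)\cap M_k^d$, giving $\PP\bigl(\forall B\in\Omega_k(p,s)\colon x\in B\bigr)\ge\prod_{\ell}\frac{s_\ell}{2^k-1}$, after which your conclusion via $1-t\le e^{-t}$ coincides with the paper's.
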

\medskip
\begin{proof}[Proof of Theorem~\ref{thm:main}]
Recall that $X=\{x^1,x^2,\dots,x^n\}\subset M_k^d$ is our random point set.
By a simple union bound and Lemma~\ref{lem:prob}, we obtain
\[\begin{split}
\PP\bigl(\exists B\in\Omega_k\colon X\cap B=\varnothing\bigr)
\;&\le\; \sum_{p,s\colon \Omega_k(p,s)\neq\varnothing} 
	\PP\bigl(\exists B\in\Omega_k(p,s)\colon X\cap B=\varnothing\bigr) \\
\;&=\; \sum_{p,s\colon \Omega_k(p,s)\neq\varnothing} 
	\PP\bigl(\exists B\in\Omega_k(p,s)\colon x^1\notin B \bigr)^n \\
\;&<\; \#\bigl\{(p,s)\colon \Omega_k(p,s)\neq\varnothing\bigr\} \,
	\exp\bigl(-n\,2^{-k-4} \bigr).
\end{split}\]

To estimate further, we need to bound from above the number of pairs $(p,s)\in M_k^d\times \{0,\dots,2^k-1\}^d$,
for which $\Omega_k(p,s)$ is non-empty. We observe, that it is impossible to find an interval $I_{\ell}\subset[0,1]$
with $|I_{\ell}|>\frac{s_{\ell}}{2^k}$ and $\inf I_{\ell}\ge p_{\ell}-\frac{1}{2^k}$ if $p_{\ell}-\frac{1}{2^k}+\frac{s_{\ell}}{2^k}\ge 1$.
Therefore, $\Omega_k(p,s)$ is empty if $p_{\ell}2^k\ge 2^k+1-s_{\ell}$ for some $\ell=1,\dots,d.$

Furthermore, this implies that if $s$ is such that $\Omega_k(p,s)\neq\varnothing$ for some $p\in M_k^d$, then there are exactly
$\prod_{\ell=1}^d(2^k-s_\ell)$ choices for $p$ with
$\Omega_k(p,s)\neq\varnothing$, i.e., $\#\{p:\Omega_k(p,s)\neq\varnothing\}=\prod_{\ell=1}^d(2^k-s_\ell)$.
Denoting $m_1(s):=\#\{\ell\colon s_\ell<2^k-1\}$, we see that
$$
\#\{p:\Omega_k(p,s)\neq\varnothing\}\le 2^{k m_1(s)}.
$$
Now note that, 
for $B\in\Omega_k(p,s)$, we have
\[
2^{-k} \,<\, |B| \,\le\, \prod_{\ell=1}^{d} \Bigl(\frac{s_\ell+1}{2^k}\Bigr)
\,\le\, \Bigl(1-\frac{1}{2^k}\Bigr)^{m_1(s)}.
\]
Therefore, $\Omega_k(p,s)\neq\varnothing$ implies that 
\begin{equation}\label{eq:m1}
m_1(s) \,<\, A_k := \ln(2)\, k\, 2^k. 
\end{equation}
Indeed, we have
$$
2^{-k}<\Bigl(1-\frac{1}{2^k}\Bigr)^{m_1(s)}\quad\text{and}\qquad 
2^{-k}>\Bigl(1-\frac{1}{2^k}\Bigr)^{\ln(2)k2^k}.
$$
The latter of these two formulas follows by a monotone convergence of $(1-1/2^k)^{2^k}$ up to $e^{-1}$.

The number of $s\in\{0,\dots,2^k-1\}^{d}$ with $m_1(s) < A_k$ 
is bounded by 
\[
\binom{d}{A_k}\, 2^{k A_k} \;<\; \left(\frac{4d}{k}\right)^{A_k}, 
\]
where we use $\binom{d}{A_k}\le(e d/A_k)^{A_k}$ and $e/\ln(2)<4$.
We obtain
\[\begin{split}
\#\Bigl\{(p,s)\colon \Omega_k(p,s)\neq\varnothing\Bigr\} 
\;&<\; \left(\frac{4d}{k}\right)^{A_k} 2^{k A_k} 
\;\le\; \exp\Bigl(k\, 2^{k}\Bigl(k + \log_2(4d/k)\Bigr)\Bigr) \\
\;&\le\; \exp\Bigl(k\, 2^{k} \log_2\bigl(2^{k+1}d\bigr)\Bigr)
\end{split}\]
and 
\[\begin{split}
\PP\bigl(\exists B\in\Omega_k\colon X\cap B=\varnothing\bigr)
\;&<\; \exp\Bigl(k\, 2^{k} \log_2\bigl(2^{k+1}d\bigr) -n\,2^{-k-4}\Bigr),
\end{split}\]
which is smaller than one if 
\[
n \;\ge\; 2^4\, k\, 2^{2k} \log_2\bigl(2^{k+1}d\bigr).
\]
This ensures the existence of a set $X$ with $n$ points,
for which $X\cap B\neq\varnothing$ for all cubes $B$ with $|B|>2^{-k}$. 
Therefore,
\[
N(2^{-k},d) \;\le\; 2^4\, k\, 2^{2k} \log_2\bigl(2^{k+1}d\bigr).
\]
Finally, from $2^{-k}\le\varepsilon<2^{-k+1}$, $2^{k-1}<\varepsilon^{-1}\le 2^k$ 
and $k\ge \log_2(1/\varepsilon)>k-1$, we get that


\[\begin{split}
N(\eps,d) \,&\le\, 2^6\,\frac{\bigl(1+\log_2(\varepsilon^{-1})\bigr)
	\,\log_2\bigl(4 d \eps^{-1}\bigr)}{\varepsilon^{2}} \\
\,&\le\, 2^7\,\log_2(d)\,\frac{\bigl(1+\log_2(\varepsilon^{-1})\bigr)^2}{\varepsilon^{2}}.
\end{split}\]
\end{proof}
\medskip

For the proof of Theorem~\ref{thm:main} it remains to prove Lemma~\ref{lem:prob}. 
But before that, we state an alternative bound.

\bigskip

\begin{rem}\label{rem:daniel}
If we repeat the above computations with the bound $m_1(s)<A_k$ replaced by 
$m_1(s)\le d$, then we see that also 
\[\begin{split}
\PP\bigl(\exists B\in\Omega_k\colon X\cap B=\varnothing\bigr)
\;&<\; 2^{2kd}\,\exp\Bigl(-n\,2^{-k-4}\Bigr).
\end{split}\]
As above, this shows that
\[
N(\eps,d) \,\le\, 2^6\,d\,\frac{1+\log_2(\varepsilon^{-1})}{\varepsilon}
\]
for $\eps\in(0,1/2)$. This is, up to constants, the result that was 
proved recently by Rudolf~\cite{Ru17}.
Note that Rudolf's bound is better than ours from Theorem~\ref{thm:main} 
if $A_k>d$, i.e., if 
$\eps<C/(d \ln(d))$ for some $C>0$.
\end{rem}

\bigskip

\begin{proof}[Proof of Lemma~\ref{lem:prob}]
Let $x$ be uniformly distributed in $M_k^d$ and let
$B=I_1\times\dots\times I_d\in\Omega_k$. 
Then we know that $B\in\Omega_k(p,s)$ for some $p\in M_k^d$ and
$s\in\{1,\dots,2^k-1\}^d$. By the definition of $\Omega_k(p,s)$, see \eqref{eq:defOmegak}, we know that
$\inf I_{\ell}<p_{\ell}$ for all $\ell=1,\dots,d.$ On the other hand,
\[
\sup I_{\ell}=\inf I_{\ell}+|I_{\ell}|> p_{\ell}-\frac{1}{2^k}+\frac{s_{\ell}}{2^k}.
\]
We conclude, that $I_{\ell}\cap M_k\supset\Big\{p_{\ell},\dots,p_{\ell}+\frac{s_{\ell}-1}{2^k}\Bigr\}$ for every $\ell=1,\dots,d.$
Therefore, if we set $B(p,s):=\prod_{\ell=1}^d[p_\ell,p_\ell+\frac{s_\ell-1}{2^k}]$,
we obtain $B\cap M_k^d\supset B(p,s)\cap M_k^d$. Let us also observe, that the probability that a randomly selected element of $M_k$
falls into $\Big\{p_{\ell},\dots,p_{\ell}+\frac{s_{\ell}-1}{2^k}\Bigr\}$ is equal to $\frac{s_\ell}{2^k-1}$.
Hence, 
\[
\PP(x\in B) \,\ge\, \PP(x\in B(p,s)) \,=\, \prod_{\ell=1}^{d}\Bigl(\frac{s_\ell}{2^k-1}\Bigr)
\,=\, \prod_{\ell\in D_s}\Bigl(\frac{s_\ell}{2^k-1}\Bigr)
\]
with $D_s:=\bigl\{\ell\in\{1,\dots,d\}\colon s_\ell<2^k-1\bigr\}$. 
Next, we use the inequality
\begin{equation}\label{eq:ineq'}
\Bigl(\frac{j}{2^k-1}\Bigr)
\,\ge\, \Bigl(1-\frac{1}{k2^k}\Bigr)\, \Bigl(\frac{j+1}{2^k}\Bigr)^{\frac{k}{k-1}}
\quad\text{ for all }\quad j=1,\dots,2^k-2,
\end{equation}
which will be proved later.
We obtain
\[\begin{split}
\PP(x\in B)\;&\ge\; \Bigl(1-\frac{1}{k2^k}\Bigr)^{|D_s|}\, \prod_{\ell\in D_s} \Bigl(\frac{s_\ell+1}{2^k}\Bigr)^{\frac{k}{k-1}}\\
&\ge\; \Bigl(1-\frac{1}{k2^k}\Bigr)^{|D_s|}\; |B|^{\frac{k}{k-1}}.
\end{split}\]
Since $|D_s|=m_1(s)<\ln(2)k2^k$ if $B\in\Omega_k(p,s)\neq\varnothing$, 
see~\eqref{eq:m1}, we obtain
\begin{align*}
\PP(x\in B) \;&\ge\; \Bigl(1-\frac{1}{k2^k}\Bigr)^{\ln(2)k2^k} 2^{-\frac{k^2}{k-1}} 
\;\ge\; \Bigl(1-\frac{1}{8}\Bigr)^{8\ln(2)} 2^{-\frac{k^2}{k-1}}\\
\;&>\; \frac{1}{4}\cdot 2^{-\frac{k^2}{k-1}} \;\ge\; \frac{1}{16}\cdot 2^{-k},
\end{align*}
where we have again used the monotonicity of the sequence 
$(1-1/k)^k$ and that $\frac{k^2}{k-1}\le k+2$ for $k\ge 2$.


To show \eqref{eq:ineq'}, we prove that
\begin{equation}\label{eq:ineq''}
\min_{j=1,2,\dots,2^k-2}\frac{j}{(j+1)^\frac{k}{k-1}}\ge (2^k-1)2^{-\frac{k^2}{k-1}}\Bigl(1-\frac{1}{k2^k}\Bigr).
\end{equation}
As the function $t\mapsto \frac{t}{(1+t)^{\frac{k}{k-1}}}$ has only one local extremum on $(0,\infty)$ and this extremum is a local maximum, it is enough to
consider $j\in\{1,2^k-2\}$ and to ensure that
$$
\min\Bigl(\frac{1}{2^{\frac{k}{k-1}}},\frac{2^k-2}{(2^k-1)^{\frac{k}{k-1}}}\Bigr)\ge (2^k-1)2^{-{\frac{k^2}{k-1}}}\Bigl(1-\frac{1}{k2^k}\Bigr).
$$
This splits naturally into two inequalities. The first one (for $j=1)$ follows from
$$
\frac{1}{2^{\frac{k}{k-1}}}= 2^k\cdot 2^{-{\frac{k^2}{k-1}}}.
$$
The second one (for $j=2^k-2$) is equivalent to
$$
\frac{2^k-2}{2^k-1}\ge \Bigl(\frac{2^k-1}{2^k}\Bigr)^{\frac{k}{k-1}}\Bigl(1-\frac{1}{k2^k}\Bigr),
$$
which, by monotonicity, will be established if we prove it with the exponent $\frac{k}{k-1}$ replaced by 1, i.e.,
$$
\frac{2^k-2}{2^k-1}\ge \Bigl(\frac{2^k-1}{2^k}\Bigr)\Bigl(1-\frac{1}{k2^k}\Bigr).
$$
By simple algebraic manipulations, this is equivalent to
$$
2^{2k}-2\cdot 2^{k}\ge (2^k-1)^2\Bigl(1-\frac{1}{k2^k}\Bigr)=2^{2k}-2\cdot 2^k+1-\frac{2^{2k}-2\cdot 2^k+1}{k2^k}
$$
and
$$
2^{2k}-2\cdot 2^k+1\ge k\cdot 2^k,
$$
which holds for $k\ge 2$. 
This finishes the proof of $\PP(x\in B)>2^{-k-4}$ for all $B\in\Omega_k$.

For the second statement of the lemma note that
$B\cap M_k^d\supset B(p,s)\cap M_k^d$ for all $B\in\Omega_k(p,s)$.
This shows
\[
\PP\bigl(\forall B\in\Omega_k(p,s)\colon x\in B \bigr)
\;\ge\; \PP\bigl(x\in B(p,s)\bigr)
\;>\; 2^{-k-4}
\]
and therefore
\[
\PP\bigl(\exists B\in\Omega_k(p,s)\colon x\notin B \bigr)
\;<\; 1-2^{-k-4} \,\le\, \exp\bigl(-2^{-k-4} \bigr).
\]
\end{proof}

\medskip

\begin{rem}
We stress that our proof is very much inspired by the proof of 
Sosnovec~\cite{Sos17}. 
The idea of counting, for each box $B$, the maximal number of coordinates 
with length at least $1-1/2^k$, i.e., $A_k$, is from there.
Our new approach is to consider also the smaller boxes more carefully. 
In particular, note that Lemma~\ref{lem:prob} shows that random points 
from $M_k^d$ ``behave'' like uniformly distributed points from $[0,1]^d$ 
as long as we consider only boxes with volume larger $1/2^k$.
\end{rem}

\begin{rem}
It seems that our technique does not lead to any improvement on the 
upper bounds of Rudolf~\cite{Ru17} in the periodic setting. 
In any case, our present proof would not work. 
For this, note that the proof of Lemma~\ref{lem:prob} requires, 
in particular, that every box $B$ with $|B|>1-1/2^k$ will be 
reached by a random $x\in M_k^d$ with probability one. 
This is clearly not the case if $B$ is allowed to be periodic.
\end{rem}

\goodbreak

\end{document}